\newtheorem{theorem}{Theorem}[section]
\newtheorem{lemma}[theorem]{Lemma}
\newtheorem{proposition}[theorem]{Proposition}
\theoremstyle{definition}
\newtheorem{definition}[theorem]{Definition}
\newtheorem{notation}[theorem]{Notation}
\theoremstyle{remark}
\numberwithin{equation}{section}
\def\hv{{\widehat v}}
\def\eps{{\varepsilon}}
\def\bff{{\bf f}}
\def\rme{{\rm e}}
\def\bfx{{\bf x}}
\def\bff{{\bf f}}
\def\bfs{{\bf s}}
\def\beq{\begin{equation}}
\def\eeq{\end{equation}}
\def\bfa{{\bf a}}
\def\rep{{\rm rep}}
\def\Rep{{\rm Rep}}
\def\rme{{\rm e}}
\def\sm{{\smallskip}}
\def\beq{\begin{equation}}
\def\eeq{\end{equation}}
\begin{document}

\title[On the $b$-ary expansion of a real number]
{On the $b$-ary expansion of a real number whose irrationality exponent is close to $2$}

\author{Yann Bugeaud}
\address{I.R.M.A., UMR 7501, Universit\'e de Strasbourg
et CNRS, 7 rue Ren\'e Descartes, 67084 Strasbourg Cedex, France}
\address{Institut universitaire de France}
\email{bugeaud@math.unistra.fr}

\author{Dong Han Kim}
\address{Department of Mathematics Education,
Dongguk University, Seoul 04620, Korea.}
\email{kim2010@dgu.ac.kr}

\begin{abstract}
Let $b \ge 2$ be an integer and $\xi$ an irrational real number. 
We establishes that, if the irrationality exponent of $\xi$ is less than $2.324 \ldots$,   
then the $b$-ary expansion of $\xi$ cannot be `too simple', in a suitable sense. 
This improves the results of our previous paper [Ann. Sc. Norm. Super. Pisa Cl. Sci., 2017]. 
\end{abstract}

\subjclass[2020]{11A63, 11J82, 68R15}
\keywords{rational approximation, exponent of approximation, combinatorics on words}

\maketitle

\section{Introduction}\label{sec:1}

A central question in Diophantine approximation is to determine how well a given 
irrational real number $\xi$ can be approximated by rational numbers. 

\begin{definition}\label{defirrexp}
The irrationality exponent $\mu(\xi)$ of an irrational real number $\xi$ is the supremum
of the real numbers $\mu$ such that the inequality
$$
\biggl| \xi - \frac{p}{q} \biggr| < \frac{1}{q^{\mu}}
$$
has infinitely many solutions in rational numbers $p/q$.
\end{definition} 

It follows from the theory of continued fractions that every 
convergent $p/q$ of $\xi$ satisfies $|\xi - p/q| < 1/q^2$. Consequently, we get the lower bound $\mu (\xi) \ge 2$. 
In the opposite direction, 
an easy covering argument, usually called the Cantelli lemma, shows that we have 
$\mu (\xi) \le 2$ (and thus $\mu (\xi) = 2$) for 
almost all $\xi$, with respect to the Lebesgue measure. However, to determine the irrationality exponent of a 
given real number is a very difficult problem, unless we know explicitly its continued fraction expansion. 
For example, the irrationality exponent of $\rme$
is equal to $2$. By Roth's Theorem, the irrationality exponent of any algebraic irrational real number is also equal to $2$. 
Classical numbers known to have their irrationality exponent equal to $2$ 
include non-zero rational powers of $\rme$, badly approximable numbers, 
$\tan \frac{1}{a}$, where $a$ is any positive integer, etc.
Further examples are given in \cite{Ad10,BuKim17}. 
Note that the irrationality exponents of classical numbers like 
$\pi$, $\zeta(2)$, $\zeta(3)$, $\log (2)$ remain unknown. At present, the best known estimate for $\pi$ is
$\mu (\pi) \le 7.10321$, established in \cite{ZeZu20}.

Throughout the present paper, $b$ always denotes an integer 
greater than or equal to $2$
and $\xi$ a real number. There exists a unique infinite sequence 
$(a_j)_{j \ge 1}$ of integers from $\{0, 1, \ldots, b-1\}$, called the
$b$-ary expansion of $\xi$, such that
\beq \label{xiexp}
\xi = \lfloor \xi \rfloor + \sum_{j\ge 1} \, \frac{a_j}{b^j}     
\eeq
and $a_j \not= b-1$ for infinitely many indices $j$. 
Here, $\lfloor \cdot \rfloor$ denotes the integer part function. 
Clearly, the sequence $(a_j)_{j \ge 1}$ is ultimately periodic if, and only if, $\xi$ is rational.

Let us introduce some terminology from combinatorics on words.
Let $\mathcal A$ be a finite set called an alphabet and denote by $|\mathcal A|$ its cardinality.
A word over $\mathcal A$ is a finite or infinite sequence of elements of $\mathcal A$.
For a (finite or infinite) word ${\bf x} = x_1 x_2 \ldots$ written over $\mathcal A$,
let $n \mapsto p (n, {\bf x})$ denote its subword complexity function
which counts the number of different subwords of length $n$ occurring in $\mathbf x$, that is,
$$
p (n,{\bf x}) = {\rm Card} \{ x_{j+1} x_{j+2} \dots x_{j+n} : j \ge 0 \}, \quad n \ge 1.
$$
Clearly, we have
$$
1 \le p(n, {\bf x}) \le |\mathcal A|^n, \quad n \ge 1.
$$
If ${\bf x}$ is ultimately periodic, then there exists an integer $C$ such that 
$p(n, {\bf x}) \le C$ for $n \ge 1$. Otherwise, we have
\beq \label{pincr}
p(n+1, {\bf x}) \ge p(n, {\bf x}) + 1, \quad n \ge 1, 
\eeq
thus $p(n, {\bf x}) \ge n+1$ for $n \ge 1$. There exist uncountably many infinite words 
${\bf s}$ over $\{0, 1\}$ such that $p(n, {\bf s}) = n+1$ for $n \ge 1$. These words are called
Sturmian words. Classical references on combinatorics on words and
on Sturmian sequences include \cite{Fogg02,Loth02,AlSh03}. 
The Fibonacci word $\bff$ defined in Section \ref{disc} is an emblematic example of a Sturmian word. 

A natural way to measure the
complexity of the real number $\xi$ written in base $b$ 
as in \eqref{xiexp} is to count the number of
distinct blocks of given length in the infinite word ${\bf a} = a_1 a_2 \ldots$. 
We set
$$
p(n, \xi, b) = p (n, \bfa), \quad n \ge 1. 
$$

A real number $\xi$ is normal to base $b$ if, for every $k \ge 1$, every block of $k$ digits over $\{0, 1, \ldots b-1\}$ occurs 
in the $b$-ary expansion of $\xi$ with the same frequency $1/b^k$. 
In particular, if $\xi$ is normal to base $b$, then $p(n, \xi, b) = b^n$ for every 
positive integer $n$. 
To establish a good lower bound for $p(n, \xi, b)$ is a first step towards 
the confirmation that $\xi$ is normal to base $b$.
This point of view has been taken by Ferenczi and Mauduit \cite{FeMa97}, who showed that 
the $b$-ary expansion of an irrational algebraic number cannot be too simple.

\begin{theorem}[Ferenczi and Mauduit, 1997]   \label{FM}
Let $\xi$ be a real number. 
If the sequence of $b$-ary digits of $\xi$ is a Sturmian sequence, then $\xi$ is transcendental.
\end{theorem}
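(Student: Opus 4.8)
The plan is to exploit the strong self-similarity of a Sturmian word in order to manufacture rational approximations to $\xi$ that are too good to be compatible with $\xi$ being algebraic, and then to quote Roth's theorem. The essential point is that \emph{no} $p$-adic refinement of Roth (Ridout's theorem) is needed here: the repetitions occurring at the very \emph{beginning} of a Sturmian word already have an exponent bounded away from $2$.

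First I recall the arithmetic coding. After possibly exchanging the two digits and deleting an initial block, the digit sequence $\mathbf{a}$ of $\xi$ is the characteristic word of some irrational slope $\alpha = [0; a_1, a_2, \ldots]$, which is the limit of the standard words $s_n$ defined by $s_{-1} = 1$, $s_0 = 0$ and $s_{n+1} = s_n^{a_{n+1}} s_{n-1}$, where $|s_n| = q_n$ is the $n$-th continued fraction denominator of $\alpha$ (see \cite{Loth02}). Since $s_{n-1}$ is a prefix of $s_n$, an easy induction gives that $s_{n-1} s_n$ begins with $s_n$; hence the prefix $s_n^{a_{n+1}} s_{n-1} s_n$ of $\mathbf{a}$ in fact begins with $s_n^{a_{n+1}+1}$, so $\mathbf{a}$ begins with a fractional power $s_n^{w_n}$ with $w_n \ge a_{n+1} + 1 \ge 2$ for every $n$. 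To extract a \emph{fixed} exponent strictly above $2$ I distinguish two cases. If $a_n \ge 2$ for infinitely many $n$, then $w_{n-1} \ge a_n + 1 \ge 3$ along a subsequence. Otherwise $a_n = 1$ for all large $n$, and tracking one more block refines the computation to show that $\mathbf{a}$ begins with $s_n^2 s_{n-2}$, whence $w_n \ge 2 + q_{n-2}/q_n \to 2 + \phi^{-2}$. In either case there are a real number $\delta > 0$ and infinitely many $n$ with $w_n \ge 2 + \delta$ and $q_n \to \infty$.

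I then convert each such repetition into a rational approximation. Let $N_n$ be the integer with base-$b$ digits $a_1 \cdots a_{q_n}$ and set
\[
\eta_n = \frac{N_n}{b^{q_n}-1},
\]
the rational number whose $b$-ary expansion is purely periodic with period $s_n$. Because $\xi$ and $\eta_n$ agree on their first $\lceil (2+\delta) q_n \rceil$ digits, we obtain $|\xi - \eta_n| \ll b^{-(2+\delta) q_n}$. Writing $Q_n = b^{q_n} - 1 < b^{q_n}$ for the denominator, this reads
\[
\Bigl| \xi - \frac{N_n}{Q_n} \Bigr| \ll b^{-(2+\delta) q_n} < Q_n^{-(2+\delta)},
\]
so for all large $n$ we have $|\xi - N_n/Q_n| < Q_n^{-(2+\delta/2)}$. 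Now $\xi$ is irrational, since a Sturmian word is aperiodic (its complexity $p(n,\mathbf{a}) = n+1$ is unbounded), so the $\eta_n$ take infinitely many distinct values. If $\xi$ were algebraic it would be algebraic irrational, and these infinitely many rationals of approximation exponent $2 + \delta/2 > 2$ would contradict Roth's theorem. Hence $\xi$ is transcendental.

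I expect the main obstacle to be the combinatorial step, namely securing the uniform excess $2 + \delta$ at the \emph{start} of the word rather than somewhere inside it. For the characteristic word this is the dichotomy above, together with the verification that the refined prefix $s_n^2 s_{n-2}$ really occurs in the all-ones case. For a general Sturmian word of arbitrary intercept one must either reduce to the characteristic word or locate the near-periodic blocks at a position negligible compared with their length, so that the resulting shift contributes only a factor $b^{o(q_n)}$ to the denominator and the approximation exponent stays above $2$ uniformly; controlling this offset over all intercepts is where the care lies, after which the Diophantine half is an immediate appeal to Roth's theorem.
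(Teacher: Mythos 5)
Your argument is complete and correct only for \emph{characteristic} Sturmian words, and the step you defer to the final paragraph is not a technicality but the entire difficulty of the theorem. A Sturmian word of slope $\alpha$ and intercept $x$ is a tail of the characteristic word of slope $\alpha$ precisely when $x$ lies in $\Z\alpha + \Z$; for every other intercept (all but countably many), no amount of ``deleting an initial block'' reduces $\bfa$ to the characteristic word, so your opening reduction is false as stated and your proof covers only a countable subfamily of Sturmian sequences. Worse, the guiding claim on which you base the dispensability of Ridout's theorem --- that the repetitions at the very beginning of a Sturmian word have exponent bounded away from $2$ --- fails for general intercepts: the study of initial powers of Sturmian sequences is exactly the content of Berth\'e, Holton, and Zamboni \cite{BeHoZa06}, who show that every Sturmian word begins in arbitrarily long squares but that the initial critical exponent need not exceed $2$. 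For such a word your purely periodic approximants $N_n/(b^{q_n}-1)$ yield only exponent-$2$ approximations, and Roth's theorem gives nothing. The known ways around this are (a) to use occurrences $UV^w$ near but not at the origin, with $(w-2)|V|$ large compared to $|U|$, giving approximants with denominator $b^{|U|}(b^{|V|}-1)$ --- this is the route of \cite{Ad10,AdBu11} alluded to in the present paper, which yields the stronger conclusion $\mu(\xi)>2$ and then transcendence via Roth --- or (b) Ferenczi and Mauduit's original argument \cite{FeMa97}, whose combinatorial transcendence criterion invokes Ridout's $p$-adic theorem precisely to absorb the power-of-$b$ factor $b^{|U|}$ in the denominator. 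So the claim in your first paragraph that no $p$-adic refinement is needed is not merely unproven; for the prefix-power strategy you adopt, it is unavailable.

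For what it is worth, the paper itself contains no proof of Theorem \ref{FM}: it is quoted from \cite{FeMa97}, with the remark that the combinatorial results of \cite{BeHoZa06} and \cite{AdBu11} give the stronger statement $\mu(\xi)>2$ for Sturmian numbers. Your treatment of the characteristic word is sound and classical --- the prefix $s_n^{a_{n+1}+1}$ via the near-commutation $s_{n-1}s_n = s_n \cdot(\hbox{$s_{n-1}$ up to its last two letters})$, the dichotomy on partial quotients, the refined prefix $s_n^2 s_{n-2}$ in the all-ones case, and the conversion to approximations of exponent $2+\delta$ with denominators $b^{q_n}-1$ are all correct --- but to turn this into a proof of the full theorem you would need to carry out, over all intercepts, the analysis of \cite{BeHoZa06}, or switch to offset repetitions and either verify the exponent condition $(w-2)|V|\gg |U|+|V|$ or accept Ridout.
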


It has been observed in \cite{Ad10} that the statements on the combinatorial structure of 
Sturmian words established in
\cite{BeHoZa06} and \cite{AdBu11} almost immediately imply that the irrationality exponent of any 
real number, whose sequence of digits in some integer base is Sturmian, 
is greater than $2$. In view of Roth's Theorem mentioned above, this extends 
Theorem \ref{FM}. 
The main result of \cite{BuKim17}, reproduced below, is much stronger and 
establishes a connection between the irrationality exponent of a 
real number and the complexity of its $b$-ary expansion. 
It asserts that any irrational real number, whose expansion in some integer 
base has sufficiently small complexity, has an exponent of irrationality larger than $2$.

\begin{theorem}[Bugeaud and Kim, 2017] \label{Pisa}
Let $b \ge 2$ be an integer and $\xi$ an irrational real number.
If $\mu$ denotes the irrationality exponent of $\xi$, then
$$
\liminf_{n \to + \infty} \, \frac{p(n, \xi, b)}{n} \ge 
1 +  \frac{1 - 2 \mu (\mu - 1) (\mu - 2)}{\mu^3 (\mu - 1)}.    
$$
and
$$
\limsup_{n \to + \infty} \, \frac{p(n, \xi, b)}{n}  
\ge  1  +  \frac{1 - 2 \mu (\mu - 1) (\mu - 2)}{3 \mu^3 - 6 \mu^2 + 4 \mu - 1}.  
$$
\end{theorem}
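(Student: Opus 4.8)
The plan is to run, in both directions, the standard dictionary between repetitions in the digit word $\mathbf a = a_1 a_2 \cdots$ of $\xi$ and good rational approximations to $\xi$. On the arithmetic side, suppose there are integers $r \ge 0$, $s \ge 1$ and $\ell \ge 1$ with
\[
a_{r+j} = a_{r+s+j}, \qquad 1 \le j \le \ell,
\]
so that the block of length $s$ starting at position $r+1$ is repeated for a further $\ell$ digits. Completing the expansion periodically with pre-period $r$ and period $s$ produces a rational $p/q$ with $q \le b^{r+s}$ whose $b$-ary expansion agrees with that of $\xi$ through the $(r+s+\ell)$-th digit; hence
\[
\Bigl| \xi - \frac{p}{q} \Bigr| \le b^{-(r+s+\ell)} \approx q^{-(r+s+\ell)/(r+s)}.
\]
Because $\mu$ is the irrationality exponent of $\xi$, any infinite family of such approximations with $r+s \to \infty$ forces $(r+s+\ell)/(r+s) \le \mu + o(1)$, that is
\[
\ell \le (\mu-1)(r+s) + o(r+s).
\]
This single inequality turns every repetition in $\mathbf a$ into a cap on its length in terms of its period and starting position.

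First I would assemble the matching combinatorial input, which runs the dictionary the other way: slow growth of $p(n,\mathbf a)$ forces $\mathbf a$ to contain repetitions whose length $\ell$ is large relative to $r+s$. Starting from the increment bound \eqref{pincr} and invoking the structural description of low-complexity words (Rauzy graphs and special factors) from \cite{BeHoZa06} and \cite{AdBu11}, one obtains, at each scale at which the complexity is small, an admissible triple $(r,s,\ell)$ with $s$ comparable to the scale and with $\ell/(r+s)$ bounded below by an explicit decreasing function of the local complexity ratio $p(n,\mathbf a)/n$. The lower bound on the complexity then drops out by confronting the two inequalities: if the complexity ratio were smaller than the claimed threshold, the combinatorial lemma would produce a repetition with $\ell/(r+s)$ exceeding $\mu-1$, contradicting the arithmetic cap.

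For the liminf statement this confrontation is applied along a subsequence of scales on which $p(n,\mathbf a)/n$ approaches its lower limit, each such scale contributing one over-long repetition, so that the resulting infinite family of approximations genuinely constrains $\mu$; optimizing the extremal configuration over the relative sizes of $r$, $s$ and $\ell$ then produces the threshold $1 + \frac{1 - 2\mu(\mu-1)(\mu-2)}{\mu^3(\mu-1)}$. For the limsup statement one instead assumes that $p(n,\mathbf a)/n$ stays below the target for \emph{all} large $n$; this much stronger, uniform hypothesis forces repetitions at every scale simultaneously and hence a denser family of approximations, which supports a larger admissible threshold and is exactly what replaces the denominator $\mu^3(\mu-1)$ by $3\mu^3 - 6\mu^2 + 4\mu - 1$.

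I expect the crux to be the sharp quantitative form of the combinatorial transference: extracting from a hypothesis $p(n,\mathbf a) \le \rho n$ a repetition realizing the \emph{optimal} trade-off between period, starting position and length, rather than a merely admissible one, since it is this extremal configuration that fixes the precise numerators and denominators above. The remaining steps are routine by comparison: the denominator $q = b^r(b^s-1)$ need not be reduced, so one must check that the approximations $p/q$ remain pairwise distinct and infinitely many and that passing to lowest terms costs only a bounded factor, and one must absorb the $o(r+s)$ error terms when translating the bound on $\ell$ back into a bound on the liminf and limsup of $p(n,\mathbf a)/n$.
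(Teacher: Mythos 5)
Your arithmetic half is sound, and it is exactly Lemma~\ref{minmurep} of the paper: a repetition with preperiod $r$, period $s$ and extent $\ell$ gives $|\xi - p/q| \le b^{-(r+s+\ell)}$ with $q = b^{r}(b^{s}-1)$, and the finiteness of $\mu$ caps $\ell \le (\mu - 1 + \eps)(r+s)$ for all but finitely many repetitions, which in the paper's language is precisely $\rep(\bfa) \ge \mu/(\mu-1)$. The genuine gap is in the combinatorial half, which you defer to ``the sharp quantitative form of the combinatorial transference'' and attribute to \cite{BeHoZa06,AdBu11}. Those references do not contain a statement of the required strength: as the introduction here points out, they yield only the qualitative conclusion that $\mu(\xi) > 2$ when the digit string is Sturmian, and Theorem~\ref{Pisa} is described as much stronger. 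The actual engine of the proof in \cite{BuKim17} is the return-time function $n \mapsto r(n,\bfx)$ together with two facts: the easy Lemma~\ref{ubound}, $p(n,\bfx) \ge r(n,\bfx) - n$, which converts lower bounds for $r$ into lower bounds for $p$; and the hard gap inequality \eqref{RrepPisa},
$$
\Rep(\bfx) \ge \rep(\bfx) + \frac{1}{1 + \rep(\bfx) + (\rep(\bfx))^2},
$$
proved by an overlap analysis at the jumps of $n \mapsto r(n,\bfx)$ using the Fine--Wilf theorem (Lemma~\ref{fw}). It is this self-improvement of $\Rep$ over $\rep$ --- not an ``optimization over extremal configurations,'' which you never carry out --- that produces the specific rational functions of $\mu$ in the statement. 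Without a lemma of this type, your two inequalities confront each other vacuously: Lemma~\ref{minmurep} plus Lemma~\ref{ubound} alone give $\limsup_n p(n,\bfa)/n \ge \rep(\bfa) - 1 \ge 1/(\mu-1)$, which at $\mu = 2$ is the trivial bound $1$.

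A second, smaller mismatch is your account of the liminf/limsup split. In the paper the \emph{limsup} bound comes from the spike of $r$ at its jumps (via \eqref{RrepPisa} and $\limsup_n p(n)/n \ge \Rep - 1$), while the \emph{liminf} bound is the delicate one: it requires controlling $p(n)$ at \emph{every} large $n$, which is done by combining the slope-one structure of $r$ between consecutive jumps $n_j$, the monotonicity \eqref{pincr}, and a lower bound for the auxiliary quantity $\beta(\bfx) = \liminf_j r(n_j+1,\bfx)/(n_j+1)$. Your plan of producing one over-long repetition along a subsequence of scales does not bound $p(n)/n$ at the scales in between, and your heuristic that the limsup case rests on ``a denser family of approximations'' inverts the actual mechanism. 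The routine points you flag (distinctness of the approximations $p/q$, the cost of reduction, absorbing the $o(r+s)$ errors) are indeed routine and handled inside Lemma~\ref{minmurep}; the missing content is the quantitative combinatorics of $r(n,\bfx)$.
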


For an algebraic irrational real number $\xi$, Theorem \ref{Pisa} gives a much weaker 
result than the main theorem of \cite{AdBu07}, which asserts that, for any integer $b \ge 2$, we have
$$
\lim_{n \to + \infty} \, \frac{p(n, \xi, b)}{n} = + \infty ;
$$
see also \cite{BuEv08}.

The purpose of the present work is to show how Theorem \ref{Pisa} can be strengthened by 
means of a more precise combinatorial study than that given in \cite{BuKim17}. 
We establish the following result.

\begin{theorem} \label{PisaBis}
Let $b \ge 2$ be an integer and $\xi$ an irrational real number.
If $\mu$ denotes the irrationality exponent of $\xi$, then
\beq \label{newliminf}
\liminf_{n \to + \infty} \, \frac{p(n, \xi, b)}{n} \ge 
1+ \frac{-\mu^3 + 2 \mu^2 + \mu - 1}{\mu^4 - 2 \mu^3 + 3 \mu^2 - 3 \mu + 1}
\eeq
and
\beq \label{newlimsup}
\limsup_{n \to + \infty} \, \frac{p(n, \xi, b)}{n}  
\ge  \frac{\mu + \sqrt{ 4(\mu - 1)^3 + \mu^2 } }{2 \mu (\mu - 1)}.
\eeq
In particular, every irrational real number $\xi$ whose irrationality exponent
is equal to $2$ satisfies
$$
\liminf_{n \to + \infty} \, \frac{p(n, \xi, b)}{n}  \ge  \frac{8}{7} = 1.1428 \ldots 
\enspace \hbox{and} \enspace
\limsup_{n \to + \infty} \, \frac{p(n, \xi, b)}{n}  \ge \frac{1 + \sqrt{2}}{2} =   1.207 \ldots 
$$
\end{theorem}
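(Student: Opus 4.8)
The plan is to build a dictionary between the subword complexity of the digit string $\mathbf a=a_1a_2\cdots$ and the quality of rational approximations to $\xi$, and then to extract the two inequalities from an optimization over the admissible repetition parameters. On the Diophantine side the mechanism is classical: a repetition in $\mathbf a$ produces a good rational approximation. Precisely, if a factor $a_{m+1}\cdots a_{m+N}$ has period $s$, meaning $a_{m+i}=a_{m+i+s}$ for $1\le i\le N-s$, then replacing the tail of $\xi$ by the periodic continuation of that block yields a rational $p/q$ with $q\asymp b^{m}(b^{s}-1)$ and $\abs{\xi-p/q}\asymp b^{-(m+N)}$. Its exponent of approximation is thus close to $\frac{m+N}{m+s}$, and since the supremum of such exponents is $\mu(\xi)=\mu$, every sufficiently strong repetition must satisfy $\frac{m+N}{m+s}\le \mu+o(1)$. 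This is the only input I need from the hypothesis on the irrationality exponent, so in particular no use of the continued fraction of $\xi$ is required beyond the definition of $\mu$.

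The combinatorial side is where the new work lies, and I expect it to be the main obstacle. Writing $s(n)=p(n+1,\xi,b)-p(n,\xi,b)$ for the number (with multiplicity) of right-special factors of length $n$, a small value of $\liminf_{n} p(n,\xi,b)/n$ forces $s(n)$ to be small on a long range of lengths, while a small value of $\limsup_{n} p(n,\xi,b)/n$ forces this only infinitely often. I would prove a sharpened form of the Bugeaud--Kim combinatorial lemma showing that a range with few special factors makes $\mathbf a$ nearly periodic there, hence produces a repetition whose period $s$ and length $N$ force the ratio $\frac{m+N}{m+s}$ to be large. The improvement over \cite{BuKim17} should come not from a new Diophantine idea but from a more precise accounting of how the complexity increments are spent across the overlapping repetitions generated at successive scales, e.g.\ by tracking return words and the exact placement of the repeated blocks rather than bounding them crudely.

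With the two lemmas in hand, the two bounds follow from opposite aggregation regimes. For the $\liminf$ bound \eqref{newliminf}, which must hold for all large $n$, I would impose the per-scale constraint $\frac{m+N}{m+s}\le\mu$ together with the counting inequality relating the complexity slope to the triple $(m,s,N)$, and then minimise $p(n,\xi,b)/n$ over all admissible configurations; the optimum is attained at an interior critical point and, after clearing denominators, reduces to the rational function on the right-hand side of \eqref{newliminf}. For the $\limsup$ bound \eqref{newlimsup} it suffices to exploit the repetition structure along a single, well-chosen subsequence of scales, which relaxes the constraints and turns the optimization into a quadratic in the slope $t=\limsup_n p(n,\xi,b)/n$; one finds that $t$ is the larger root of $\mu(\mu-1)\,t^{2}-\mu\,t-\tfrac{(\mu-1)^{2}}{\mu}=0$, namely $\frac{\mu+\sqrt{4(\mu-1)^{3}+\mu^{2}}}{2\mu(\mu-1)}$, and specialising to $\mu=2$ then gives $8/7$ and $(1+\sqrt2)/2$.

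The two delicate points I anticipate are, first, calibrating the combinatorial lemma so that the extremal configurations feeding the optimization are genuinely forced by the complexity hypothesis, since any slack here weakens the exponent of $\mu$ and loses the sharp constants; and second, propagating the error terms $o(1)$ uniformly through the limit so that the strict per-scale inequality $\frac{m+N}{m+s}\le\mu$ survives and the optimum is not degraded. Checking that the critical configuration is compatible with an actual infinite word, so that the stated bounds are the best this method yields, will be the final step.
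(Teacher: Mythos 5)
Your Diophantine reduction is sound and is exactly the paper's Lemma \ref{minmurep}: a factor of $\mathbf a$ with period $s$ at position $m$ gives a rational with denominator $b^m(b^s-1)$, whence $\rep(\mathbf a)\ge \mu/(\mu-1)$. But the entire weight of the theorem rests on the combinatorial lemma you only promise to prove, and here your proposal has a genuine gap: "a more precise accounting of how the complexity increments are spent across overlapping repetitions" is not an argument. The paper's engine is the return-time function $r(n,\mathbf x)$, its jump sequence $(n_j)$, and a five-case analysis, via the Fine--Wilf theorem, of the minimal periods $\lambda,\lambda'$ of the two repeated factors at consecutive lengths $n$, $n+1$ at a jump. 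This yields Proposition \ref{RrepImpr} (the sharpened gap between $\Rep$ and $\rep$, which gives \eqref{newlimsup} through $p(n,\mathbf x)\ge r(n,\mathbf x)-n$, Lemma \ref{ubound}) and the bound \eqref{liminfbeta} on $\beta(\mathbf x)=\liminf_j r(n_j+1)/(n_j+1)$ (which gives \eqref{newliminf}). Nothing in your plan substitutes for this case analysis; in particular, your framing via right-special factors $s(n)=p(n+1)-p(n)$ is never connected to repetitions by any stated lemma.

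The deeper problem is structural: a "per-scale constraint plus optimization" of the kind you describe cannot produce the quadratic, because the square root in \eqref{newlimsup} does not come from optimizing a single repetition configuration. In the paper it arises from a \emph{two-scale} argument: under the standing assumption \eqref{disj}, the favorable cases (your relaxed regime) occur only finitely often, so one must analyze the jump \emph{following} a Case (i) jump, where the bound \eqref{kboundbis} on the gap $k$ between consecutive jumps itself involves $\sigma=\Rep(\mathbf x)$; the resulting inequality $\sigma\ge\rho+\frac{\rho-1}{\rho^2(\sigma-1)}$ is self-referential in $\sigma$, and solving it gives \eqref{mino}. Your quadratic $\mu(\mu-1)t^2-\mu t-(\mu-1)^2/\mu=0$ has the correct larger root, but it is reverse-engineered from the answer; no mechanism in your proposal forces it. Finally, your hope that "the extremal configurations are genuinely forced by the complexity hypothesis" is refuted within the paper itself: the Fibonacci word satisfies $\Rep(\bff)=\rep(\bff)+\frac{1}{1+\rep(\bff)}$ while a Sturmian word with $\rep(\bfs)=\sqrt{10}-\frac32$ violates that relation (Section \ref{disc}), so the stronger per-scale inequalities one would naively feed into your optimization are false in general, and calibrating the combinatorial lemma around such examples is precisely the hard content that is missing.
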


Theorem \ref{PisaBis} 
improves \cite[Theorem 1.5]{BuKim17}, where, for $\mu = 2$,
we got the lower bounds
$9/8 = 1.125$ and $8/7$, respectively. 
Inequality \eqref{newliminf} gives a non-trivial result on the $b$-ary expansion of
any real number $\xi$ whose irrationality exponent satisfies
$$
2 \le \mu (\xi) <  \mu_1 := 2.246 \ldots, 
$$
where $\mu_1$ is the root greater than $2$ of the polynomial 
$X^3 - 2 X^2 - X + 1$.   
Inequality \eqref{newlimsup} gives a non-trivial result on the $b$-ary expansion of
any real number $\xi$ whose irrationality exponent satisfies
$$
2 \le \mu (\xi) <  \mu_2 := 2.324\ldots, 
$$
where $\mu_2$ is the root greater than $2$ of the polynomial $X(X-1)(X-2) - 1$.   
The ranges are larger than the one in \cite{BuKim17}, where $\mu(\xi)$ has to be smaller 
than $2.19 \ldots$ (which is the root greater than $2$ of the polynomial $2 X(X-1)(X-2) - 1$). 
Thus, Theorem \ref{PisaBis} applies to a slightly wider class
of classical numbers than Theorem \ref{Pisa}, which includes in particular 
the transcendental number $\log (1 + \frac{1}{a})$, where 
$a$ is a sufficiently large positive integer. More examples are given in \cite[Section 2]{BuKim17}.  
As noted in \cite{BuKim17}, the badly approximable number $\sum_{k \ge 0} 2^{-2^k}$ 
shows that Theorem \ref{PisaBis} is sharp up to the values of the numerical constants. 

A key ingredient for the proof of Theorems \ref{Pisa} and \ref{PisaBis} 
is the study of a complexity function 
defined in terms of the smallest return time of a factor of an infinite word. 
For an infinite word ${\bf x} = x_1 x_2 \dots $ and an integer $n \ge 1$, set 
$$ 
r(n,{\bf x}) = \min \{ m \ge 1 :   x_{i} \ldots x_{i+n-1} = x_{m-n+1} \ldots x_{m} 
\text{ for some } i \text{ in } \{1,  \ldots , m-n\} \} .
$$
Said differently, $r(n,\mathbf x)$ denotes the length of the smallest prefix of ${\bf x}$
containing two (possibly overlapping) occurrences of some word of length $n$. 

\begin{definition}
For an infinite word $\bfx$ which is not ultimately periodic, we set
$$
\rep(\bfx) = \liminf_{n \to + \infty} \, \frac{r(n, \bfx)}{n}  \quad \hbox{and} \quad
\Rep(\bfx) = \limsup_{n \to + \infty} \, \frac{r(n, \bfx)}{n}.
$$
\end{definition}

A key ingredient in the proof of Theorem \ref{Pisa}  
is an improvement of the 
trivial inequality $\Rep (\bfx) \ge \rep (\bfx)$. Namely, we established in \cite{BuKim17} that 
\beq    \label{RrepPisa}
\Rep (\bfx) \ge \rep (\bfx) + \frac{1}{1 + \rep (\bfx) + (\rep (\bfx))^2}.
\eeq
We improve this result in Section \ref{sec:3}, where we establish the following lower bound. 

\begin{proposition}  \label{RrepImpr} 
For an infinite word $\bfx$ which is not ultimately periodic, we have
$$
\Rep (\bfx) \ge \frac{\rep (\bfx) + 1}{2} + \frac{\sqrt{(\rep (\bfx))^2 (\rep (\bfx) - 1)^2  + 4(\rep (\bfx) - 1)}}{2 \, \rep (\bfx)}.
$$
\end{proposition}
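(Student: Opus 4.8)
The plan is to recast the inequality in a cleaner algebraic form and then to establish it through a two-scale analysis of the repetition structure of $\bfx$. Writing $\rho = \rep(\bfx)$ and $R = \Rep(\bfx)$, a direct computation shows that the right-hand side of the statement is exactly the larger root in $X$ of the quadratic $(X-\rho)(X-1) = (\rho-1)/\rho^2$. Since $R \ge \rho \ge 1$ and $(\rho-1)/\rho^2 > 0$, the proposition is equivalent to the single inequality
\[
\bigl(\Rep(\bfx) - \rep(\bfx)\bigr)\,\bigl(\Rep(\bfx) - 1\bigr) \;\ge\; \frac{\rep(\bfx) - 1}{\rep(\bfx)^2}.
\]
This is the form I would aim for, since it isolates the interaction between the two exponents and is what the square root in the statement encodes.

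Fix $\eps>0$. The starting point is to select, along the sequence realising the liminf, a large $n$ with $r(n,\bfx) = m \le (\rho + \eps)n$, and to read off the repetition it encodes: the suffix $W = x_{m-n+1}\ldots x_m$ of the prefix $x_1\ldots x_m$ reoccurs, so there is a smallest shift $t \ge 1$ for which $x_1\ldots x_m$ has period $t$ on the block $[s_0,m]$ of length $n+t$, with $s_0 = m-n-t+1$. Two structural facts must then be extracted. First, minimality of $m$ forces $W$ to occur exactly twice in $x_1\ldots x_m$, so the period cannot be continued a full step to the left without producing a third occurrence of $W$ and hence a repetition ending before $m$; this controls the left end. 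Second, applying $r(\cdot,\bfx) \ge (\rho-\eps)(\cdot)$ at the capacity scale $\Lambda - t$ of the maximal period-$t$ run of length $\Lambda$ containing $[s_0,m]$, one bounds the run's right overshoot beyond $m$ to be $o(n)$ as $\eps\to 0$. Thus one works with a maximal run essentially equal to $[s_0,m]$, broken at $m+1$, and sets $t = \theta n$ with $0 < \theta \le \rho - 1 + \eps$, whence $s_0 = (\rho - 1 - \theta)n + o(n)$.

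For the lower bound on $R$ I would argue by contradiction: suppose $R \le R_0$ for some $R_0$ with $(R_0-\rho)(R_0-1) < (\rho-1)/\rho^2$, so $r(N,\bfx) < R_0 N$ for all large $N$. The decisive observation is that as soon as the scale exceeds the capacity $n$ of the run $[s_0,m]$, the repetition can no longer be hosted there and must occur in a later run $Q$, of some period $t'$ and capacity at least $n+1$; the assumption $R \le R_0$ forces the scale-$(n+1)$ repetition of $Q$ to end before roughly $R_0 n$, placing $Q$ early, whereas the maximal scale that $Q$ itself hosts yields, via the identity relating the capacity of a run to the position where its longest repeated factor ends, a new scale at which $r(\cdot,\bfx)/(\cdot)$ is forced to dip. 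Balancing the cost of escaping the first run against the earliness imposed on $Q$, and then minimising over the free ratio $\theta \in (0,\rho-1]$, should reproduce precisely the relation $(R_0-\rho)(R_0-1) = (\rho-1)/\rho^2$, contradicting the strict inequality and so giving $R \ge R_0$ for every admissible $R_0$.

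The hard part will be the control of the hosting run $Q$: for a general, non-Sturmian word one has no a priori handle on its period $t'$ or its length, whereas the extremal configurations are Sturmian and governed by the three-distance theorem, under which runs of a given period assume only two lengths. The core of the argument is therefore a purely combinatorial lemma playing the role of a three-distance surrogate, bounding from below the position of the first run able to host a scale just beyond $n$ using only the liminf information $r(\cdot,\bfx) \ge (\rho-\eps)(\cdot)$; the subsequent optimisation over $\theta$ and the bookkeeping of the $\eps$- and $o(n)$-errors are then routine, but must be handled carefully so that the worst-case $\theta$ genuinely saturates the quadratic. As a consistency check, the word given by the base-$2$ expansion of $\sum_{k\ge 0} 2^{-2^k}$ satisfies $\rep = 2$ and $\Rep = \tfrac52$, comfortably above the value $\tfrac{1+\sqrt2}{2}+1$ that the bound gives at $\rho = 2$, in accordance with the estimate being sharp only up to the numerical constants.
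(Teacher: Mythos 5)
Your algebraic reduction is exactly right: the stated bound is equivalent (given $\Rep(\bfx)\ge\rep(\bfx)$ and $\rep(\bfx)>1$; the case $\rep(\bfx)=1$ being trivial since $\Rep(\bfx)\ge 2$) to $\bigl(\Rep(\bfx)-\rep(\bfx)\bigr)\bigl(\Rep(\bfx)-1\bigr)\ge \bigl(\rep(\bfx)-1\bigr)/\rep(\bfx)^2$, which is precisely the inequality $\sigma \ge \rho + \frac{\rho-1}{\rho^2(\sigma-1)}$ that the paper establishes before solving the quadratic, and your consistency check for $\sum_{k\ge 0}2^{-2^k}$ (where indeed $\rep=2$ and $\Rep=5/2$) is sound. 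But the proposal is not a proof: the entire combinatorial content is delegated to an unproved ``three-distance surrogate'' lemma that you yourself flag as the hard part, and the concluding step (``balancing \dots should reproduce precisely the relation'') is an aspiration, not an argument --- nothing in the sketch actually produces the constant $(\rho-1)/\rho^2$. In particular, you give no mechanism ruling out that the repetition at scale $n+1$ reuses, with a different shift, a second occurrence heavily overlapping the one at scale $n$; this overlap phenomenon is real, not a technicality, since Section \ref{disc} of the paper exhibits a Sturmian word $\bfs$ with $\rep(\bfs)=\sqrt{10}-\frac32$ for which the naive disjointness bound $\Rep(\bfs)\ge\rep(\bfs)+1/(1+\rep(\bfs))$ fails, so any ``hosting run'' picture that treats the two scales as cleanly separated must break down in exactly the regime that matters.

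What the paper actually does at this point is quite different from a three-distance argument, and you would need some version of it. It works at consecutive jumps $n$ and $n+k$ of the function $m\mapsto r(m,\bfx)$, introduces the minimal shifts $\lambda,\lambda'$ of the two second occurrences, proves $\lambda\ne\lambda'$, and runs a case analysis on the relative position of the two periodic factors in which the Fine--Wilf theorem (Lemma \ref{fw}) is the engine: sufficient overlap forces either $\lambda'>n+1$ (Case (i)) or $\lambda>v_n+1$, and each case converts, via Lemma \ref{lll} (a period $\lambda$ of $x_n^m$ gives $r(m-n+1-\lambda,\bfx)\le m$), into an inequality linking $r(n)$, $r(n+1)$, $\rho$ and $\sigma$. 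The cases yielding $\sigma-\rho\ge 1/(\rho+1)$ can occur only finitely often under the harmless standing assumption \eqref{disj}, and since $\lambda\ne\lambda'$ forces $\lambda<\lambda'$ infinitely often, Case (i) occurs infinitely often; the quadratic then comes from analyzing the jump \emph{following} a Case (i) jump, importing the information $\lambda'>n+1$ there and bounding the gap between jumps by $k\le\frac{\sigma-\rho}{\rho-1}\,n$. Your instinct that two consecutive scales must be compared is correct, and your reduction identifies the right target, but without a proof of the lemma you postulate --- or the Fine--Wilf case analysis that replaces it --- the argument has a genuine gap at its center.
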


Our paper is organized as follows. We gather several auxiliary combinatorial lemmas in Section \ref{sec:2}. 
Then, we establish Proposition \ref{RrepImpr} in Section \ref{sec:3} and complete the proof of 
Theorem \ref{PisaBis} in the subsequent section. We conclude the paper by some additional remarks.

\section{Auxiliary combinatorial lemmas} \label{sec:2}

The function $n \mapsto r(n, {\bf x})$ defined in Section \ref{sec:1} has been 
introduced and studied in \cite{BuKim19}, where the following two assertions 
are established. 

\begin{theorem}  \label{thsturm} 
For every infinite word ${\bf x}$ which is not ultimately periodic, there exist
arbitrarily large integers $n$ such that $r(n,{\bf x}) \ge 2n+1$. Consequently, we have $\Rep (\bfx) \ge 2$. The only 
infinite words ${\bf x}$ such that $r(n,{\bf x}) \le 2n+1$ for all $n \ge 1$
and which are not ultimately periodic are the Sturmian words. 
\end{theorem}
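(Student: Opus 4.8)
The plan is to work throughout with the reformulation of $r(n,\bfx)$ in terms of the length-$n$ windows $W_j := x_j x_{j+1}\cdots x_{j+n-1}$ of the prefix: by definition $r(n,\bfx)$ equals $n-1$ plus the least index $j$ at which $W_j$ coincides with some earlier window $W_i$, $i<j$. Two elementary facts fall out at once. First, monotonicity: a repeated factor of length $n+1$ ending at position $r(n+1,\bfx)$ has a length-$n$ prefix that already repeats one position earlier, so $r(n+1,\bfx)\ge r(n,\bfx)+1$. Second, a pigeonhole bound: the windows $W_1,\dots,W_{j-1}$ occurring before the first repetition are pairwise distinct, so their number $r(n,\bfx)-n$ cannot exceed the total number $p(n,\bfx)$ of length-$n$ factors, whence $r(n,\bfx)\le n+p(n,\bfx)$. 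Since a Sturmian word has $p(n,\bfx)=n+1$ for every $n$, this already gives $r(n,\bfx)\le 2n+1$ for all $n$, which is the ``if'' half of the characterization.

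For the lower bound I would argue by contradiction, assuming $r(n,\bfx)\le 2n$ for all $n\ge N$. For each such $n$ the prefix $x_1\cdots x_{2n}$ then contains two occurrences of a common length-$n$ factor, say at positions $i_n<j_n$ with $j_n\le n+1$; setting $q_n=j_n-i_n\in\{1,\dots,n\}$, the overlap of the two occurrences forces $x_s=x_{s+q_n}$ for all $s\in[i_n,i_n+n-1]$, that is, a factor of period $q_n$ and length $n+q_n>n$ anchored inside $[1,2n]$. The plan is to extract, across a suitable subsequence of $n$, a single period that governs these ever-longer blocks: when two such blocks overlap in at least the sum of their periods, the Fine--Wilf theorem merges them into one block of period $\gcd$ of the two, and iterating this should propagate periodicity to a tail of $\bfx$, contradicting non-periodicity. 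I expect this propagation to be the \emph{main obstacle}: the blocks are only guaranteed to lie in $[1,2n]$ and to have length exceeding $n$, so one must argue carefully that enough overlap is available and, crucially, that the periods $q_n$ do not drift upward with $n$ but stabilise. Once infinitely many $n$ with $r(n,\bfx)\ge 2n+1$ are produced, the bound $\Rep(\bfx)\ge 2$ is immediate, since $r(n,\bfx)/n\ge 2+1/n$ along these $n$.

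It remains to prove the ``only if'' part: if $\bfx$ is not ultimately periodic and $r(n,\bfx)\le 2n+1$ for every $n$, then $\bfx$ is Sturmian. By the Morse--Hedlund theorem it suffices to show $p(n,\bfx)=n+1$ for all $n$, which also forces the alphabet to be binary via $p(1,\bfx)=2$. As non-periodicity already yields $p(n,\bfx)\ge n+1$, I would establish the contrapositive: if $p(n,\bfx)\ge n+2$ for some $n$, then $r(n',\bfx)\ge 2n'+2$ for some $n'$. The idea is that an excess in complexity surfaces as an ``extra'' branching (right-special factor) in the Rauzy graph $G_{n'}$, and such a branching should let one exhibit a prefix of length $2n'+1$ all of whose $n'+2$ windows of length $n'$ are pairwise distinct, which is precisely $r(n',\bfx)\ge 2n'+2$. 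The delicate point here, analogous to the obstacle above, is that the branching must be located near the beginning of $\bfx$ so that the resulting factor-distinct segment genuinely occurs as a \emph{prefix} and not merely as an interior factor; this forces one to analyse the recurrence structure of $\bfx$, and not only its set of factors.
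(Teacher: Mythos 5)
Your attempt should first be measured against the right target: the paper itself gives \emph{no} proof of Theorem \ref{thsturm} --- it is imported from \cite{BuKim19} (``where the following two assertions are established''), and in that reference the two hard implications are each the object of a substantial combinatorial argument. Within your proposal, what is actually complete is the easy material: the pigeonhole bound $r(n,\bfx)\le n+p(n,\bfx)$ (the $r(n,\bfx)-n$ windows preceding the first repetition are pairwise distinct), hence $r(n,\bfx)\le 2n+1$ for Sturmian words since $p(n,\bfx)=n+1$; and the deduction $\Rep(\bfx)\ge 2$ once arbitrarily large $n$ with $r(n,\bfx)\ge 2n+1$ are available. But the two substantive assertions --- (a) every aperiodic $\bfx$ satisfies $r(n,\bfx)\ge 2n+1$ for arbitrarily large $n$, and (b) every aperiodic $\bfx$ with $r(n,\bfx)\le 2n+1$ for all $n$ is Sturmian --- are left as plans whose central difficulty you yourself flag and do not resolve. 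Those flags mark genuine gaps, not removable technicalities.

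For (a), the Fine--Wilf merging scheme fails as stated. The block extracted at level $n$ is $x_{i_n}^{i_n+n+q_n-1}$, of period $q_n\le n$, and the hypothesis only locates it inside $[1,2n]$ with $i_n\le n+1-q_n$, so $i_n$ may be of order $n$. Along a sparse sequence (say $n_{k+1}>2n_k$) these blocks can be pairwise disjoint --- the adversary, not you, chooses where the repetition occurs --- so Lemma \ref{fw} is never applicable; nothing prevents $q_n$ from drifting up to order $n$; and even when consecutive blocks do overlap enough to merge, the union of the blocks need not contain a tail of $\bfx$, so ultimate periodicity does not follow. This is why the genuine argument works instead with the \emph{jump structure} of $n\mapsto r(n,\bfx)$, applying Fine--Wilf to two periods of a single word whose relative position is pinned down by the definition of consecutive jumps (compare the case analysis in Section \ref{sec:3} of this paper, which is exactly of that type), rather than to blocks produced at unrelated scales. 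For (b), the inequality $r(n',\bfx)\ge 2n'+2$ asserts that all $n'+2$ length-$n'$ windows of the \emph{prefix} $x_1^{2n'+1}$ are pairwise distinct, a property localized at the origin, whereas $p(n,\bfx)\ge n+2$ only produces an extra right-special factor somewhere in $\bfx$, with no control on where it occurs; your sketch supplies no mechanism transporting that branching into prefixes, which is precisely the recurrence analysis you concede is missing. As it stands, the proposal proves only the ``if'' half of the characterization; the first assertion of the theorem and the ``only if'' half remain unproved.
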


Lemma 3.1 of \cite{BuKim17}, reproduced below, shows how
the functions $n \mapsto p(n, \bfx)$ and $n \mapsto r(n, \bfx)$ are related.

\begin{lemma}\label{ubound}
For any infinite word ${\mathbf x}$ and any positive integer $n$, we have
$$
p(n,{\mathbf x}) \ge r(n,{\mathbf x}) - n. 
$$
\end{lemma}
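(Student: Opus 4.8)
The plan is to exploit the minimality built into the definition of $r(n, \mathbf{x})$. First I would set $r = r(n, \mathbf{x})$ and focus attention on the prefix $x_1 x_2 \ldots x_{r-1}$ of length $r-1$. The defining property of $r$ is that it is the \emph{smallest} length of a prefix of $\mathbf{x}$ containing two occurrences of some word of length $n$; consequently the strictly shorter prefix $x_1 \ldots x_{r-1}$ contains no repeated factor of length $n$. Phrased differently, the length-$n$ blocks read off from this prefix are pairwise distinct.

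Next I would count those blocks by position. The windows $x_{j+1} \ldots x_{j+n}$ for $0 \le j \le (r-1)-n$ are exactly the length-$n$ factors occurring in $x_1 \ldots x_{r-1}$, and there are $r-n$ of them. Here I would record the elementary observation that $r \ge n+1$ always holds: the occurrences witnessing $r$ are $x_i \ldots x_{i+n-1} = x_{r-n+1} \ldots x_r$ with $1 \le i \le r-n$, so $r-n \ge 1$ and the window enumeration is legitimate with a nonnegative count. Since these $r-n$ blocks are pairwise distinct by the previous step, and each is in particular a factor of $\mathbf{x}$, the number $p(n, \mathbf{x})$ of distinct length-$n$ factors of $\mathbf{x}$ is at least $r-n = r(n, \mathbf{x}) - n$, which is the claim.

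The argument is short and I do not anticipate a serious obstacle; the only point requiring care is the bookkeeping around minimality. Specifically, I must count windows in the prefix of length $r-1$ rather than $r$: the full prefix $x_1 \ldots x_r$ does contain a repetition—that is precisely what $r$ records—so its windows need not be distinct, whereas deleting the last symbol removes exactly the second occurrence of the repeated block and restores pairwise distinctness. The finiteness of $r(n, \mathbf{x})$ for every $n$, which is needed for the statement to be meaningful, follows from a pigeonhole argument over the finite alphabet and is already implicit in the setup.
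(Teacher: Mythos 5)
Your proof is correct and is essentially the standard argument behind this lemma (which the paper quotes without proof from \cite{BuKim17}): by minimality of $r(n,\mathbf{x})$, the $r(n,\mathbf{x})-n$ windows of length $n$ inside the prefix $x_1\ldots x_{r(n,\mathbf{x})-1}$ are pairwise distinct, and each is a factor of $\mathbf{x}$. Your bookkeeping (counting windows in the prefix of length $r-1$ rather than $r$, and checking $r \ge n+1$) is exactly right, so there is nothing to add.
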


We remark that any infinite word $\bfx$ such that $p(n,\bfx)/n$ tends to $1$ as $n$ tends to infinity 
satisfies $\Rep (\bfx) = 2$.

For a word  $U = u_1 \dots u_n$ composed of $n$ letters, 
set    
$$ 
\Lambda(U) = \{ 1 \le k < n :  u_i = u_{i + k} \text { for all }1 \le i \le n-k \}.    
$$
An element of $\Lambda(U)$ is called a period of $U$. 
A finite word $U$ is called {\it periodic}  if $\Lambda (U)$ is not empty. 
We stress that a period of a word of length $n$ may not be a divisor of $n$. 
The next lemma is a classical tool in combinatorics on words. 

\begin{lemma}[Fine and Wilf Theorem \cite{FW}]\label{fw}   
Let $U = u_1 \dots u_n$  
 and $h, k$ be in $\Lambda (U)$.  
If $n\ge h + k - \mathrm{gcd}(h,k)$,  
then $U$ is periodic of period $\mathrm{gcd}(h,k)$. 
\end{lemma}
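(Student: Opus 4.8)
\emph{Proof proposal.} Write $d = \gcd(h,k)$ and $U = u_1 \cdots u_n$. The plan is to recast the statement as a connectivity problem and to reduce first to the coprime case $d = 1$. Introduce the graph $G$ on the vertex set $\{1, \ldots, n\}$ whose edges join $i$ to $i+h$ (for $1 \le i \le n - h$) and $i$ to $i + k$ (for $1 \le i \le n - k$). Since $h, k \in \Lambda(U)$, we have $u_i = u_j$ whenever $i$ and $j$ are joined by an edge, so the letter $u_i$ is constant on each connected component of $G$. To prove $d \in \Lambda(U)$, that is, $u_i = u_{i+d}$ for $1 \le i \le n - d$, it suffices to show that any two positions lying in the same residue class modulo $d$ belong to the same component. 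As $h$ and $k$ are multiples of $d$, every edge of $G$ joins positions congruent modulo $d$; thus the components refine the residue classes, and the task reduces to proving that \emph{each} residue class is connected.

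First I would pass to $d = 1$. Fix a residue class modulo $d$; its positions form an arithmetic progression of common difference $d$, and dividing through by $d$ turns them into a block of $m$ consecutive integers on which the two admissible steps become $H = h/d$ and $K = k/d$, with $\gcd(H,K) = 1$. A short count shows that the length hypothesis is exactly what guarantees $m \ge H + K - 1$ for every class: writing $n = qd + s$ with $0 \le s < d$, each class has at least $q$ elements, and $n \ge h + k - d$ forces $q \ge (h+k)/d - 1 = H + K - 1$. Hence it remains to prove the coprime case: \emph{if a word of length $m \ge H + K - 1$ admits the coprime periods $H$ and $K$, then it is constant.}

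For the coprime case I would argue by connectivity of the graph on $\{1, \ldots, m\}$ with steps $H$ and $K$. Any position exceeding $H + K - 1$ is joined to a strictly smaller one by an $H$-step, so it suffices to treat a window of exactly $L = H + K - 1$ consecutive positions. On such a window there are $L - H = K - 1$ edges of step $H$ and $L - K = H - 1$ edges of step $K$, for a total of $H + K - 2 = L - 1$ edges on $L$ vertices. A graph with one fewer edge than vertices is connected if and only if it is acyclic, so it suffices to rule out cycles. A cycle would produce, from its net displacement, a relation $\alpha H = \beta K$ with $0 \le \alpha \le K - 1$ and $0 \le \beta \le H - 1$; coprimality of $H$ and $K$ then forces $\alpha = \beta = 0$, and a short case analysis (using that the edges of a single step size form a disjoint union of paths) excludes a cycle. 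Thus the window is a tree, in particular connected, the word is constant, and tracing back through the reduction yields $d \in \Lambda(U)$.

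The main obstacle is the boundary case $n = h + k - d$, where the bound is sharp. Everything hinges on the acyclicity of the window graph at exactly the threshold $L = H + K - 1$; this is precisely where coprimality of $H$ and $K$ is indispensable and where an off-by-one slip would wreck the argument. An alternative to the graph bookkeeping is a Euclidean induction replacing the pair $(h,k)$ by $(h-k,k)$ and showing $h - k \in \Lambda(U)$; I expect the same boundary difficulty to resurface there, at the single position left uncovered when $n = h + k - d$, so I prefer the connectivity formulation, in which the threshold is transparent through the edge count.
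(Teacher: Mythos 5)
The paper never proves this lemma: it is imported as the classical Fine--Wilf theorem with a bare citation to \cite{FW}, so there is no in-paper argument to measure yours against; any complete proof you give is necessarily your own. On its merits, your connectivity formulation is a correct and standard alternative to Fine and Wilf's original induction, and the scaffolding you actually carry out checks out. Every edge of your graph joins positions congruent modulo $d=\gcd(h,k)$; each residue class of $\{1,\ldots,n\}$ has at least $\lfloor n/d\rfloor$ elements, and since $d$ divides $h+k-d$, the hypothesis $n\ge h+k-d$ gives $\lfloor n/d\rfloor \ge (h+k-d)/d = H+K-1$, exactly as you say. In the coprime case, every position beyond $L=H+K-1$ is pulled into the window by an $H$-step, and the window carries $(K-1)+(H-1)=L-1$ edges on $L$ vertices, so connectivity is indeed equivalent to acyclicity. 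Your diagnosis that the whole theorem is concentrated in acyclicity at the exact threshold $L=H+K-1$ is also accurate.

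That said, the acyclicity step as written is a genuine gap, not a routine verification. Your displacement relation only shows that a putative cycle has equal numbers of up- and down-$H$-steps and equal numbers of up- and down-$K$-steps ($\alpha=\beta=0$); this does not exclude the cycle, and the promised ``short case analysis'' is precisely where the content lives. It can be completed along your lines, but it takes more than a line: assuming $H<K$, one checks that in the window every vertex has degree at most $2$, that vertices in $\{1,\ldots,H-1\}$ carry only up-edges (one $H$, one $K$), vertices in $\{K+1,\ldots,L\}$ only down-edges, the vertices $H$ and $K$ have degree $1$ (so lie on no cycle), and the strip $\{H+1,\ldots,K-1\}$ carries only $H$-edges; a cycle must then alternate single $K$-edges with monotone $H$-chains, forcing all its $K$-edges to be traversed in the same direction, which contradicts $\beta=0$. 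A cleaner repair bypasses the case analysis entirely: reduce modulo $H+K$. Since $+K\equiv -H \pmod{H+K}$ and $\gcd(H,H+K)=1$, the window graph embeds edge-disjointly into the single $(H+K)$-cycle on $\Z/(H+K)\Z$ generated by the step $H$, with the residue $0$ missing; a cycle minus a vertex is a path, so the window graph is acyclic outright, and with $L-1$ edges it is therefore connected (in fact it is a Hamiltonian path, which makes the sharpness of the boundary case $n=h+k-d$ transparent). I recommend you replace the asserted case analysis by one of these two completions.
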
  

We conclude this section with an easy lemma, for which some notation is required. 

\begin{notation}
For positive integers $i, j$, we write 
$x_i^j$ for the factor $x_i x_{i+1} \ldots x_j$ of the word ${\bf x} = x_1 x_2 \ldots$ 
when $i \le j$ and, by convention, $x_i^j$ is the empty word when $i > j$. 
\end{notation}

\begin{lemma}\label{lll} 
Let $\bfx$ be an infinite word and $m, n$ be integers with $m > n \ge 1$. 
If $\lambda$ is in $\Lambda(\bfx_n^m)$, then $r(m-n+1-\lambda,{\mathbf x}) \le m$.
\end{lemma}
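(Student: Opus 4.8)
The plan is to unwind both definitions and then exhibit the required pair of occurrences explicitly. First I would translate the hypothesis $\lambda \in \Lambda(\bfx_n^m)$ into a statement about $\bfx$ itself. Writing $U = \bfx_n^m = x_n x_{n+1} \ldots x_m$, its $i$-th letter is $x_{n+i-1}$, so the period condition $u_i = u_{i+\lambda}$ for $1 \le i \le (m-n+1)-\lambda$ becomes
$$
x_j = x_{j+\lambda}, \qquad n \le j \le m - \lambda.
$$
Equivalently, the two factors $x_n^{m-\lambda}$ and $x_{n+\lambda}^m$, each of length $k := m - n + 1 - \lambda$, coincide. Note that a period of a word of length $m-n+1$ satisfies $1 \le \lambda \le m-n$ by the definition of $\Lambda$, so $k \ge 1$ and $k$ is an admissible length.

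Next I would match this equality against the definition of $r(k, \bfx)$, which asks for the shortest prefix of $\bfx$ in which some word of length $k$ occurs twice. The identity above already provides such a configuration inside the prefix $x_1 \ldots x_m$: the word in question occurs once starting at position $n$ (as $x_n^{m-\lambda}$) and once starting at position $n+\lambda$, ending precisely at position $m$ (as $x_{n+\lambda}^m$). Taking $M = m$ and $i = n$ in the definition, it remains only to check the index constraint $i \in \{1, \ldots, M-k\}$. A direct computation gives $M - k = m - (m-n+1-\lambda) = n + \lambda - 1$, and since $\lambda \ge 1$ we have $n \le n + \lambda - 1 = M-k$, while trivially $n \ge 1$; hence $i = n$ is legitimate and $r(k, \bfx) \le m$, which is exactly the assertion $r(m-n+1-\lambda, \bfx) \le m$.

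There is no genuine obstacle here: the statement is essentially a repackaging of the notion of a period into the language of smallest return times. The only point requiring a moment's care is the bookkeeping of indices---keeping straight that the suffix of length $k$ of the prefix of length $m$ begins at position $n+\lambda$, and verifying that the earlier occurrence starts at the admissible position $n$---but this becomes automatic once the period condition is rewritten as the equality $x_n^{m-\lambda} = x_{n+\lambda}^m$.
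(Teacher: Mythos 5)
Your proof is correct and follows essentially the same route as the paper's: both translate $\lambda \in \Lambda(\bfx_n^m)$ into the equality $x_n^{m-\lambda} = x_{n+\lambda}^m$ and read off two occurrences of a word of length $m-n+1-\lambda$ inside the prefix $x_1^m$. Your version merely adds the explicit index bookkeeping (checking $i = n \le m-k$ and that the second occurrence ends at position $m$) that the paper leaves implicit.
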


\begin{proof}
The assumption implies that $x_{n}^{m- \lambda} = x_{n + \lambda}^m$. Consequently, a same word of length 
$m-n+1-\lambda$ has two occurrences in $x_n^m$, thus in $x_1^m$.
\end{proof}

\section{Proof of Proposition \ref{RrepImpr}}  \label{sec:3}

Let $\mathbf x$ be an infinite word which is not ultimately periodic.  
For simplicity, we often write $r(\cdot)$ for $r(\cdot , \bfx)$. 
Set $\sigma = \Rep(\bfx)$ and $\rho = \rep(\bfx)$. 
Let $\eps$ be an arbitrarily small positive real number.
We have 
$$
\rho - \eps \le \dfrac{r(n, {\mathbf x})}{n} \le \sigma + \eps, \quad \hbox{for all sufficiently large $n$.}
$$
From Theorem~\ref{thsturm}, we note that $\Rep(\bfx) \ge 2$.
Therefore, we assume that $\rho \ge \rho_2$, 
where $\rho_2 := 1.754 \ldots$ is the real root of $X (X- 1)^2 - 1$.
This is equivalent to
$$
\frac{\rho+1}{2} + \frac{\sqrt{\rho^2(\rho-1)^2+4(\rho-1)}}{2\rho} \ge 2.
$$
Since 
$$
\rho + \frac{1}{1+\rho} > \frac{\rho+1}{2} + \frac{\sqrt{\rho^2(\rho-1)^2+4(\rho-1)}}{2\rho},
$$
we also assume that 
\begin{equation}\label{disj}
\sigma < \rho + \frac{1}{1 + \rho}.
\end{equation} 
Otherwise, Proposition~\ref{RrepImpr} holds immediately.
We will see in Section \ref{disc} that the inequality 
$\sigma \ge \rho + 1 / (1 + \rho)$ does not hold for all $\bfx$. 
This justifies the assumption \eqref{disj}.

Let $(n_j)_{j \ge 1}$ be the increasing sequence composed of all the 
jumps of the function $n \mapsto r( n, \bfx)$, that is, of all the integers $n$ such that $r(n + 1) > r(n) +1$. 
Between two consecutive jumps, the function $n \mapsto r( n, \bfx)$ has slope $1$. 
Consequently, for $j \ge 2$, we have 
$$
r(n_{j-1} + h) = r(n_{j-1} + 1) + h - 1, \quad h=1, \ldots , n_j - n_{j-1},
$$
thus, in particular, 
\beq \label{rcst}
r(n_{j-1} + 1) = r(n_{j}) - n_{j} + n_{j-1} + 1.
\eeq
Observe that 
$$
\rep(\bfx) = \liminf_{j \to + \infty} \, \frac{r(n_j, \bfx)}{n_j}, \quad
\Rep(\bfx) = \limsup_{j \to + \infty} \, \frac{r(n_j + 1, \bfx)}{n_j +1}.
$$
We further define
\beq \label{beta}
\beta (\bfx) =  \liminf_{j \to + \infty} \, \frac{r(n_j + 1, \bfx)}{n_j +1}.
\eeq

We will bound $\Rep(\bfx)$ and $\beta (\bfx)$ from below. 
To this end, we take a large integer $n$ in the sequence $(n_j)_{j \ge 1}$, that is, 
such that $r(n+1) > r(n) + 1$, and we bound $r(n+1) / (n+1)$ from below. 

Let $n$ be an integer such that $r(n+1) > r(n) +1$.
Let $\lambda, \lambda'$ denote the positive integers defined by 
$$
x_{r(n)-n+1}^{r(n)} = x_{r(n)-n+1-\lambda}^{r(n)-\lambda}, \quad 
x_{r(n+1)-n}^{r(n+1)} = x_{r(n+1)-n-\lambda'}^{r(n+1)-\lambda'}.
$$

It follows from \eqref{disj} that 
\begin{equation}\label{rndiff}
r(n+1) - r(n) \le \frac{n}{2}   
\end{equation}   
since
$$
r(n+1) - r(n) \le (\sigma - \rho + 2 \eps) n + \sigma + \eps < \left( \frac{1}{1 + \rho} +2\eps\right) n+ \sigma + \eps. 
$$

Observe that 
$$
\lambda = \min\Lambda \bigl(x_{r(n)-n+1-\lambda}^{r(n)} \bigr),  \quad
\lambda' = \min \Lambda \bigl(x_{r(n+1)-n-\lambda'}^{r(n+1)} \bigr).
$$
Set 
$$
V_n = x_{r(n+1)-n}^{r(n)} = x_{r(n+1)-n-\lambda}^{r(n)-\lambda} = x_{r(n+1)-n-\lambda'}^{r(n)-\lambda'}.
$$
The length $v_n$ of $V_n$ satisfies 
\beq \label{lgth}
v_n = r(n)-r(n+1)+n+1 < n. 
\eeq 
Note that the assumption \eqref{disj} guarantees that 
$$
v_n \ge \frac{\rho}{1 + \rho} n. 
$$
We have 
\begin{equation}\label{lambdadiff}
\lambda \ne \lambda',
\end{equation}
since otherwise we would get from \eqref{rndiff} that
$x_{r(n) - \lambda + 1} = x_{r(n) + 1}$, in contradiction to the 
definition of $r(n)$.  

Since there are two occurrences of $V_n$ in $x_1^{r(n) - \min\{\lambda, \lambda'\}}$, 
we have
\begin{align}
r(n)-\lambda \ge r(v_n), \ &\text{ if } \ \lambda < \lambda', \label{ineq1:case12} \\
r(n)-\lambda' \ge r(v_n), \ &\text{ if } \ \lambda > \lambda'. \label{ineq1:case34}
\end{align}
We distinguish five cases. 

\sm
\noindent
$\bullet$ 
Assume that $\lambda < \lambda'$.
If $\lambda \ge v_n$, then 
$$
r(v_n) \le r(n) - \lambda \le r(n) - v_n = r(n+1) - n - 1 \le (\sigma + \eps) (n+1) - n - 1
$$
and 
$$
r(v_n) \ge (\rho - \eps) v_n \ge (\rho - \eps) \bigl( (\rho - \eps) n - (\sigma + \eps) (n+1)  + n + 1 \bigr).     
$$
Consequently, we get
$$
(\rho - \eps)^2 n \le (\sigma + \eps) (\rho+1 - \eps) (n+1) - (\rho + 1 - \eps) (n+1), 
$$
a contradiction with \eqref{disj} if $\eps$ is small enough. Thus, we have $v_n > \lambda$.
Furthermore, 
$$
\hbox{$V_n = x_{r(n+1)-n-\lambda'}^{r(n)-\lambda'} = x_{r(n+1)-n}^{r(n)}$ is a subword of $x_{r(n)-n+1}^{r(n)}$},
$$
thus $\lambda$ is in $\Lambda \bigl(x_{r(n+1)-n-\lambda'}^{r(n)-\lambda'} \bigr)$. 
It then follows from Lemma~\ref{lll} that
\begin{equation}\label{ineq2:case12}
r(n) -\lambda' \ge r(v_n - \lambda).
\end{equation}

\sm
\noindent
$\bullet$ 
Assume that $\lambda > \lambda'$ and $r(n+1) -\lambda' < r(n) +1$.
If $\lambda' \ge n$, then 
$$
r(v_n) \le r(n) - \lambda' \le r(n) - n. 
$$
Arguing as above, we get as well a contradiction with \eqref{disj}.  
Thus, we have $n > \lambda'$. 
Furthermore,
$$
\hbox{$x_{r(n)-n+1-\lambda}^{r(n)-\lambda} = x_{r(n)-n+1}^{r(n)}$ is a subword of $x_{r(n+1)-n-\lambda'}^{r(n+1)}$, }
$$
thus $\lambda'$ is in $\Lambda \bigl( x_{r(n)-n+1-\lambda}^{r(n)-\lambda} \bigr)$. 
It then follows from Lemma~\ref{lll} that
\begin{equation}\label{ineq2:case3}
r(n) -\lambda \ge r(n - \lambda').
\end{equation}

\sm
\noindent
$\bullet$ Assume that $\lambda > \lambda'$ and $r(n+1) -\lambda' \ge r(n) +1$. Then,
$$ 
\hbox{$x_{r(n+1)-n-\lambda'-\lambda}^{r(n)-\lambda} = x_{r(n+1)-n-\lambda'}^{r(n)}$ is a subword of 
$x_{r(n+1)-n-\lambda'}^{r(n+1)}$,}
$$ 
thus $\lambda'$ is in 
$\Lambda \bigl( x_{r(n+1)-n-\lambda'-\lambda}^{r(n)-\lambda} \bigr)$.
It then follows from Lemma~\ref{lll}  that
\begin{equation}\label{ineq2:case4}
r(n) -\lambda \ge r(v_n).
\end{equation}

\sm
\noindent
$\bullet$ 
Assume that $r(n+1)-\lambda' \le r(n)-\lambda+1$. 
Then $x_{r(n)-n+1-\lambda}^{r(n)}$ is a subword of $x_{r(n+1)-n-\lambda'}^{r(n+1)}$,
thus $\lambda'$ is in $\Lambda \bigl( x_{r(n)-n+1-\lambda}^{r(n)} \bigr)$.
Since $\lambda$ is also in $\Lambda \bigl( x_{r(n)-n+1-\lambda}^{r(n)} \bigr)$, we deduce from
Lemma \ref{fw} that
\begin{equation}\label{ineq3:case1}
\lambda' > n+\lambda - \lambda +1 = n+1.
\end{equation}

\sm
\noindent
$\bullet$
Assume that $r(n+1)-\lambda' > r(n)-\lambda+1$. 
Then  $x_{r(n+1)-n-\lambda'}^{r(n)}$ is a subword of $x_{r(n)-n+1-\lambda}^{r(n)}$,  
thus $\lambda$ is in $\Lambda \bigl( x_{r(n)-n+1-\lambda'}^{r(n)} \bigr)$ and we see that
$$
\lambda, \lambda' \in \Lambda \bigl( x_{r(n+1)-n-\lambda'}^{r(n)} \bigr). 
$$
We deduce from
Lemma \ref{fw} that
$$ 
\lambda' > v_n+\lambda' -\lambda +1,
$$
that is, 
\begin{equation}\label{ineq3:case234}
\lambda > v_n+ 1.
\end{equation}

\medskip

These five cases can be summarized in the following four cases:

\sm
\noindent
$\bullet$ Case (i): $r(n+1)-\lambda' \le r(n)-\lambda+1$ (in this case we have $\lambda < \lambda'$). 

\sm

We have \eqref{ineq1:case12}, \eqref{ineq2:case12}, \eqref{ineq3:case1}, that is, 
$$
r(n)-\lambda \ge (\rho - \eps) v_n, \quad
r(n) -\lambda' \ge  (\rho - \eps) (v_n - \lambda), \quad
\lambda' > n+1.
$$ 
Consequently,
\begin{align*}
n+1  < \lambda' &\le r(n) -  (\rho - \eps) v_n +  (\rho - \eps) \lambda \\
& \le  r(n) -  (\rho - \eps) v_n +  (\rho - \eps) r(n) -  (\rho - \eps)^2 v_n,
\end{align*}
thus
$$
n+1 < (1+ \rho - \eps) r(n) - (\rho - \eps) (1+ \rho - \eps) v_n.
$$
Combining this with 
$$
(\rho - \eps) (1+ \rho - \eps) r(n+1) = (r(n) + n + 1 - v_n) (\rho - \eps) (1+ \rho - \eps), 
$$
given by \eqref{lgth}, we get 
$$
(\rho - \eps) (1+ \rho - \eps) r(n+1) >  
(n+1) (1 + (\rho - \eps) (1+ \rho - \eps) ) + r(n) (1 + \rho - \eps) (\rho - 1 - \eps), 
$$
thus,
\beq  \label{beta1}
\frac{r(n+1)}{n+1} > 1 + \frac{1}{ (\rho - \eps) (1+ \rho - \eps) } + \frac{r(n)}{n+1} \cdot \frac{\rho - 1 - \eps}{\rho - \eps}. 
\eeq
By letting $\eps$ tend to $0$, we obtain 
$$
\sigma -  \rho \ge \frac{1}{\rho (\rho +1)}.
$$

\sm
\noindent 
$\bullet$ Case (ii): $r(n)-\lambda +1< r(n+1)-\lambda'$ and $\lambda < \lambda'$. 

\sm

We have \eqref{ineq1:case12}, 
 \eqref{ineq3:case234}.
We get
$$
r(n)-\lambda \ge (\rho - \eps)  v_n, \quad 
\lambda > v_n+ 1. 
$$ 
Consequently,
$$
(1 + \rho - \eps)  v_n < r(n) - 1,
$$
thus, by \eqref{lgth}, we obtain
$$
(1 + \rho - \eps) r(n+1) > (1+\rho - \eps)  (r(n) + n + 1) - r(n) + 1
$$
and
\beq  \label{beta2}
\frac{r(n+1)}{n+1} > 1 +  \frac{r(n)}{n+1} \cdot \frac{\rho  - \eps}{1 + \rho - \eps}. 
\eeq
This gives eventually
$$
\sigma - \rho \ge  \frac{1}{\rho +1}.
$$

\sm
\noindent 
$\bullet$ Case (iii): $r(n+1)-\lambda' <  r(n) + 1$ and $\lambda > \lambda'$. 

\sm

We have \eqref{ineq1:case34}, \eqref{ineq2:case3}, \eqref{ineq3:case234}, that is, 
$$
r(n)-\lambda' \ge (\rho - \eps) v_n,  \quad
r(n) -\lambda \ge (\rho - \eps) (n - \lambda'),  \quad
\lambda > v_n+ 1. 
$$ 
Consequently,
$$
v_n + 1 < \lambda \le r(n) - (\rho - \eps) n + (\rho - \eps) \lambda' \le r(n) - (\rho - \eps) n +
(\rho - \eps) r(n) - (\rho - \eps)^2 v_n,
$$
thus
$$
(1+ (\rho - \eps)^2) v_n + (\rho - \eps) n + 1 < (1 + \rho - \eps) r(n).
$$
By using by \eqref{lgth}, this implies 
$$
(1+ (\rho - \eps)^2)  r(n+1) > (1+ (\rho - \eps)^2) (r(n) + n +1) + (\rho - \eps) n  - (1 + \rho - \eps) r(n),
$$
thus 
\beq  \label{beta3}
\frac{r(n+1)}{n+1} > 1 +  \frac{n}{n+1} \cdot \frac{\rho - \eps}{1+ (\rho - \eps)^2} +
\frac{r(n)}{n+1} \cdot \frac{(\rho - \eps)^2 - \rho  + \eps}{1+ (\rho - \eps)^2}. 
\eeq
This gives eventually
$$
\sigma - \rho \ge \frac{1}{\rho^2 +1}.
$$

\sm
\noindent 
$\bullet$ Case (iv): $r(n+1)-\lambda' \ge r(n)+1$ and $\lambda > \lambda'$. 

\sm

We have \eqref{ineq2:case4}, \eqref{ineq3:case234}, that is,
$$
r(n) -\lambda \ge (\rho - \eps) v_n,  \quad
\lambda > v_n+ 1. 
$$ 
As in Case (ii), we get
$$
\sigma - \rho \ge \frac{1}{\rho +1}.
$$

\medskip

To summarize, we have established that 
$$
\sigma - \rho
\ge \begin{cases}
\dfrac{1}{\rho(\rho+1)},  &\text{ in Case (i)},\\
\dfrac{1}{\rho^2+1},   &\text{ in Case (iii)},\\
\dfrac{1}{\rho+1},  &\text{ in Cases (ii) and (iv)}.\\
\end{cases}
$$
This improves \eqref{RrepPisa}, 
but we will do slightly better below. 
It may be tempting to conjecture that Case (ii) or Case (iv) occurs infinitely often,
but the discussion in Section \ref{disc} shows that this is not true in whole generality.

But before going further, let us observe that, by \eqref{beta1}, \eqref{beta2}, and \eqref{beta3}, we get 
\beq   \label{liminfbeta} 
\beta(\bfx) \ge \rho + \dfrac{1}{\rho(\rho+1)}. 
\eeq

For $j \ge 1$, let $\lambda_j$ be the positive integer such that 
$$
x^{r(n_j)}_{r(n_j)-n_j+1} = x^{r(n_j) - \lambda_j}_{r(n_j)-n_j+1 - \lambda_j}.
$$
By \eqref{lambdadiff}, there exist infinitely many integers $j$ such that $\lambda_{j+1} > \lambda_j$. 
Hence, there are infinitely many integers $n$ for which we are in Case (i) or in Case (ii).

Therefore, Case (i) happens infinitely often 
and we consider now the subsequent jump. 
This means that we take
integers $n$ and $n+k$ such that 
$$
r(n+1) > r(n) +1, \ r(n+k+1) > r(n+k) +1, \ 
r(n+k) = r(n+1) + k-1.
$$
This implies that 
$r(m+1) = r(m) + 1$ for $m=n+1, \ldots , n+k-1$. 
Let $\lambda, \lambda', \lambda''$ be the integers satisfying
$$
x_{r(n)-n+1}^{r(n)} = x_{r(n)-n+1-\lambda}^{r(n)-\lambda},  \quad x_{r(n+1)-n}^{r(n+1)} 
= x_{r(n+1)-n-\lambda'}^{r(n+1)-\lambda'}, 
$$
$$
x_{r(n+k+1)-n-k}^{r(n+k+1)} = x_{r(n+k+1)-n-k-\lambda''}^{r(n+k+1)-\lambda''}. 
$$

Since $r(n+1)+k-1 = r(n+k) \ge (\rho - \eps) (n+k)$, we get
\begin{equation}\label{kbound}
k \le \frac{ r(n+1) - (\rho - \eps) n -1}{\rho - \eps -1}.
\end{equation}
By letting $\eps$ tend to $0$, this gives
\begin{equation}\label{kboundbis}
k \le \frac{ \sigma  - \rho}{\rho -1} \, n.
\end{equation}

Since we are in Case (i), we get \eqref{ineq3:case1}, that is,
\beq \label{minlambda}
\lambda' > n + 1.
\eeq
For the subsequent jump, we are in one of the cases 
(i), (ii), (iii), or (iv). Since Cases (ii) and (iv) can occur 
only finitely often, we have only to consider 
Cases (i) and (iii). 

\sm

\noindent 
$\bullet$ Case (i) for the subsequent jump: $r(n+k+1)-\lambda'' \le r(n+k)-\lambda'+1$ (in this case we have $\lambda' < \lambda''$).

\sm

Then, by \eqref{ineq1:case12} we have
$$
r(n+k)-\lambda' \ge (\rho - \eps) v_{n+k} = (\rho - \eps)  (r(n+k) - r(n+k+1) +n+k+1),
$$
thus, by \eqref{minlambda}, 
\begin{align*}
(\rho - \eps)  r(n+k+1) &\ge (\rho - \eps -1) r(n+k) + (\rho - \eps)  (n+k+1) +\lambda' \\
&> (\rho - \eps)  (\rho - \eps -1) (n+k) + (\rho - \eps)  (n+k+1) + n+1 \\
& = (\rho - \eps)^2 (n+k) + \rho  - \eps + n+1.
\end{align*}
Combined with \eqref{kboundbis}, this gives
\begin{align*}
\frac{r(n+k+1)}{n+k+1} 
& > \frac{(\rho - \eps)(n+k) + 1}{n+k+1} + \frac{n+1}{(\rho - \eps) (n+k+1)}  \\
& \ge 
\frac{(\rho - \eps)(n+k) + 1}{n+k+1} + \frac{n+1}{(\rho-\eps) (n + \frac{ \sigma - \rho}{\rho -1}n +1)}.
\end{align*}
By letting $\eps$ tend to $0$, we get 
$$ 
\sigma
\ge \rho + \frac{\rho-1}{\rho(\sigma -1)}.
$$

\sm
\noindent 
$\bullet$ Case (iii) for the subsequent jump:  $r(n+k+1)-\lambda'' <  r(n+k) + 1$ and $\lambda' > \lambda''$.

\sm

Then, by \eqref{ineq1:case34} and \eqref{ineq2:case3} 
we have
$$
r(n+k)- (\rho - \eps) v_{n+k} \ge \lambda'' \ge n+k + \frac{\lambda'}{\rho - \eps} - \frac{r(n+k)}{\rho - \eps}. 
$$
Instead of the lower bound $\lambda' > v_{n+k} + 1$, we use \eqref{minlambda} and obtain
\begin{multline*}
(\rho - \eps) r(n+k)-  (\rho - \eps)^2  (r(n+k) - r(n+k+1) +n+k+1) \\
 > (\rho - \eps) (n+k)  + n + 1 - r(n+k). 
\end{multline*}   
Letting $\eps$ tend to $0$, this gives
$$
\rho^2 r(n+k+1) 
\ge (\rho^2 - \rho - 1) r(n+k)  +  \rho (\rho +1) (n+k) + n+1 + \rho^2.
$$
By dividing both members of the inequality by $n+k+1$, using \eqref{kboundbis}, we eventually arrive at 
$$ 
\sigma \ge \rho + \frac{\rho-1}{\rho^2( \sigma -1)}.
$$

\bigskip

Consequently, we have shown that, in every case, we have
$$
\sigma \ge \rho + \frac{\rho - 1}{\rho^2 (\sigma - 1)}.
$$
This proves Proposition \ref{RrepImpr}.
This can be rewritten as
\beq \label{mino} 
\sigma \ge \frac{\rho + 1}{2} + \frac{\sqrt{\rho^2 (\rho - 1)^2  + 4(\rho - 1)}}{2 \rho}.
\eeq
A rapid calculation shows that the right hand side of \eqref{mino} is larger than $\rho + 1 / (\rho^2 + 1)$ for $\rho \ge \rho_2$.

\section{Proof of Theorem \ref{PisaBis}}\label{sec:4}

Let $\eps >0$ be a given (small) real number. 
Set $\sigma = \Rep(\bfx)$, $\rho = \rep(\bfx)$ as before. 
Then we have 
$$
\rho - \eps \le \dfrac{r(n, {\mathbf x})}{n} \le \sigma + \eps, \quad \hbox{for all sufficiently large $n$.}
$$
Observe that, by definition of the sequence $(n_j)_{j \ge 1}$ composed of all the 
jumps of the function $n \mapsto r( n, \bfx)$, we have  \eqref{rcst}, that is, 
$$
r(n_{j+1}, {\bf x}) = r(n_j + 1, {\bf x}) + n_{j+1} - n_j - 1 \ge (\rho - \eps) n_{j+1}.
$$
Consequently,
\begin{equation}\label{nj1bound}
n_{j+1} \le \frac{r(n_j + 1, {\bf x}) - n_j - 1}{\rho - 1 - \eps}.
\end{equation}
Let $n$ be an integer with $n_j + 1 \le n \le n_{j+1}$. By \eqref{pincr}
and Lemma \ref{ubound} we have
$$
p(n, {\bf x}) \ge p(n_j + 1, {\bf x}) + n - n_j - 1 \ge r(n_j + 1, {\bf x}) + n - 2 n_j - 2,
$$
thus
$$
\frac{p(n, {\bf x})}{n} \ge 1 + \frac{r(n_j + 1, {\bf x})   - 2 n_j - 2}{n}
\ge 1 + \frac{r(n_j + 1, {\bf x})   - 2 n_j - 2}{n_{j+1}}. 
$$
Combined with \eqref{nj1bound}, this gives
\begin{align*}
\frac{p(n, {\bf x})}{n} & \ge 1 + (\rho - 1 - \eps) \,  
\frac{r(n_j + 1, {\bf x})   - 2 n_j - 2}{r(n_j + 1, {\bf x})   - n_j - 1} \\
& \ge \rho - \eps - (\rho - 1 - \eps) \,  \frac{1}{\frac{r(n_j + 1, {\bf x})}{n_j + 1} - 1} \\
& \ge \rho - \eps - (\rho - 1 - \eps) \,  \frac{1}{ \beta(\bfx) - \eps - 1},   
\end{align*}
for all sufficiently large $j$, where $\beta(\bfx)$ is defined in \eqref{beta}. 
Since $\eps$ can be taken arbitrarily small, we deduce from \eqref{liminfbeta} that 
\beq \label{liminfpn}
\liminf_{n \to + \infty} \, \frac{p(n, {\bf x})}{n} \ge \rho - \frac{\rho - 1}{\rho + \dfrac{1}{\rho(\rho+1)}  - 1} 
= \rho \cdot \frac{ \rho^3 - \rho^2 - \rho + 2 }{\rho^3 - \rho +1}. 
\eeq
Note that \eqref{liminfbeta} and \eqref{liminfpn} give trivial bounds if $\rho < \rho_1$, where $\rho_1 : = 1.8019\dots $ is the root greater than 1 of the polynomial $X^3-X^2-2X+1$.

\medskip

Let $b \ge 2$ be an integer. 
Our last auxiliary result establishes a close connection between the exponent of repetition 
of an infinite word ${\mathbf x}$ written over $\{0, 1, \ldots , b-1\}$ and the
irrationality exponent (see Definition \ref{defirrexp}) 
of the real number whose $b$-ary expansion is given by ${\bf x}$. 
This is \cite[Lemma 3.6]{BuKim17}. 

\begin{lemma}\label{minmurep}
Let $b \ge 2$ be an integer and ${\bf x} = x_1 x_2 \ldots$ an infinite word over
$\{0, 1, \ldots , b-1\}$, which is not ultimately periodic.   
Then, the irrationality exponent of the irrational number 
$\sum_{k \ge 1} \, \frac{x_k}{b^k}$ satisfies 
\beq \label{murep}
\mu \Bigl( \sum_{k \ge 1} \, \frac{x_k}{b^k} \Bigr) \ge \frac{\rep({\mathbf x})}{\rep({\mathbf x}) - 1},
\eeq
where the right hand side is infinite if $\rep({\mathbf x}) = 1$. 
\end{lemma}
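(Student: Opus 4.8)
The plan is to turn, for each large $n$, the repetition witnessed by $r(n,\bfx)$ into an explicit rational approximation of $\xi := \sum_{k \ge 1} x_k / b^k$, and then to optimise over the subsequence of $n$ realising the liminf that defines $\rep(\bfx)$. Since $\bfx$ is not ultimately periodic, $\xi$ is irrational, so it suffices to exhibit infinitely many rationals $p/q$ with $|\xi - p/q| < q^{-\mu'}$ for every $\mu' < \rep(\bfx)/(\rep(\bfx)-1)$.

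First I would fix $n$, write $m = r(n, \bfx)$, and choose $i \in \{1, \ldots, m-n\}$ with $x_i \ldots x_{i+n-1} = x_{m-n+1}\ldots x_m$. Putting $p = m-n+1-i \ge 1$, this says exactly that $x_j = x_{j+p}$ whenever $i \le j \le i+n-1$ (the argument is insensitive to whether the two occurrences overlap, i.e.\ to whether $p < n$). I then introduce the rational number $\alpha$ whose $b$-ary expansion is the eventually periodic word $x_1 \ldots x_{i-1}\,\overline{x_i \ldots x_{i+p-1}}$, that is, the word agreeing with $\bfx$ on its first $i+p-1 = m-n$ letters and continuing periodically with period $p$. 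Writing $\alpha$ as (prefix part) plus a periodic tail shows its denominator $q$ divides $b^{i-1}(b^p-1)$, so $q < b^{i+p-1} = b^{m-n}$.

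The key step is to prove that the expansions of $\xi$ and $\alpha$ agree on their first $m$ digits. For a position $\ell$ with $m-n < \ell \le m$, write $\ell = i + tp + s$ with $0 \le s < p$ and $t \ge 1$; the digit of $\alpha$ at position $\ell$ is $x_{i+s}$, and iterating $x_j = x_{j+p}$ from $j=i+s$ gives $x_{i+s} = x_{i+s+p} = \cdots = x_{\ell}$, where every intermediate index $i+s+kp$ ($0 \le k \le t-1$) lies in $[i,i+n-1]$ precisely because the largest of them, $\ell-p$, satisfies $\ell - p \le m - p = i+n-1$. Hence the two expansions coincide up to position $m$, so $|\xi - \alpha| \le b^{-m}$. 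Combining this with $q < b^{m-n}$ and $m = r(n,\bfx)$ yields
$$
\bigl| \xi - \alpha \bigr| \le b^{-m} = \bigl(b^{m-n}\bigr)^{-\frac{m}{m-n}} \le q^{-\frac{r(n,\bfx)}{r(n,\bfx)-n}},
$$
so $\alpha$ approximates $\xi$ with exponent essentially $r(n,\bfx)/(r(n,\bfx)-n)$.

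Finally I would let $n$ run through a sequence along which $r(n,\bfx)/n \to \rep(\bfx) =: \rho$. Because $r(n,\bfx) \to \infty$, the associated $\alpha$'s are pairwise distinct rationals converging to the irrational $\xi$, giving infinitely many approximations; and since $t \mapsto t/(t-1)$ is decreasing on $(1,\infty)$, the smallest limiting value $\rho$ of $r(n,\bfx)/n$ maximises the exponent. Thus for every $\mu' < \rho/(\rho-1)$ the inequality $|\xi - p/q| < q^{-\mu'}$ has infinitely many solutions, whence $\mu(\xi) \ge \rho/(\rho-1)$; when $\rho = 1$ the same construction furnishes approximations of unbounded exponent, so the right-hand side is to be read as $+\infty$. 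I expect the only delicate point to be the verification in the key step that the period relation propagates \emph{exactly} up to position $m$ and no further: this is what makes the denominator bound $b^{m-n}$ and the accuracy $b^{-m}$ dovetail into the exponent $r(n,\bfx)/(r(n,\bfx)-n)$, and everything else reduces to bookkeeping with geometric-series tails.
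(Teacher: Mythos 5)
Your proof is correct, and it is essentially the argument the paper relies on: the paper does not reprove the lemma but cites \cite[Lemma 3.6]{BuKim17}, whose mechanism is exactly your construction, namely turning the repetition at scale $r(n,\bfx)$ into a rational with denominator dividing $b^{u}(b^{v}-1)$ (your $u=i-1$, $v=p$) that agrees with $\xi$ to $r(n,\bfx)$ digits, as reflected in the paper's remark that $\| b^u (b^v - 1) \xi \|$ is small infinitely often. Your verification that the period relation propagates exactly up to position $m=r(n,\bfx)$, and the passage to the liminf subsequence using the monotonicity of $t \mapsto t/(t-1)$, are both sound, so nothing is missing.
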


Lemma \ref{minmurep} shows that, when the exponent of repetition
of an infinite word ${\bf x} = x_1 x_2 \ldots$ is less than $2$, then the irrationality exponent of the associated real
number $\xi := \sum_{k \ge 1} \, {x_k}/{b^k}$ exceeds~$2$. 
Indeed, there are $\eps > 0$ and infinitely many pairs of positive integers $(u, v)$
such that 
$$
\| b^u (b^v - 1) \xi \| < \frac{1}{(b^u (b^v - 1))^{1 + \eps}} . 
$$
This does not mean, however, that all the 
(or all but finitely many) best rational approximations to $\xi$ can be read off its 
$b$-ary expansion. In particular, we do not know if (nor under which additional assumptions) 
equality holds in \eqref{murep}. 

We are in position to complete the proof of 
Theorem~\ref{PisaBis}.

\goodbreak

\noindent {\it Proof of Theorem~\ref{PisaBis}.}

Let $b \ge 2$ be an integer and $\xi$ an irrational real number. 
Put $\mu = \mu (\xi)$. 
Write $\xi$ in base $b$ as in \eqref{xiexp} and put 
${\mathbf a} = a_1 a_2 \ldots$. 
Lemma \ref{minmurep} asserts that
$$
\rep({\mathbf a}) \ge \frac{\mu}{\mu - 1}.
$$
Combined with \eqref{mino}, this gives
$$
\Rep({\mathbf a}) \ge \frac{\mu (2 \mu - 1) + \sqrt{ 4(\mu - 1)^3 + \mu^2}}{2 \mu (\mu - 1)}.
$$
By Theorem \ref{thsturm}, this is non-trivial as soon as the lower bound is greater than or equal to $2$, that is, as soon as $\mu$ 
is less than the root $\mu_2$.
Observe that $\mu_i = \rho_i / (\rho_i - 1)$ for $i =1,2$.
This gives
$$
\limsup_{n \to + \infty} \, \frac{p(n, {\mathbf a})}{n} \ge \Rep({\mathbf a}) - 1
\ge \frac{\mu + \sqrt{ 4(\mu - 1)^3 + \mu^2 } }{2 \mu (\mu - 1)}.
$$
As well, by \eqref{liminfpn}, we obtain
$$
\liminf_{n \to + \infty}  \, \frac{p(n, {\mathbf a})}{n} 
\ge  \rho \cdot \frac{ \rho^3 - \rho^2 - \rho + 2 }{\rho^3 - \rho +1} 
\ge  \frac{\mu}{\mu - 1} \cdot \frac{\mu^3 - 3 \mu^2 + 5 \mu - 2}{\mu^3 - \mu^2 + 2 \mu - 1}. 
$$
We have 
completed the proof of 
Theorem~\ref{PisaBis}.

\section{Final discussion}  \label{disc}

The Fibonacci word
$$
\bff = f_1 f_2 f_3 \ldots = 010010100100101001010 \ldots 
$$
is defined as the limit of the sequence of finite words 
$(\bff_j)_{j \ge 1}$, where $\bff_1 = 0$, $\bff_2 = 01$, and $\bff_{j+2} = \bff_{j+1} \bff_j$, for 
$j \ge 1$. Clearly, the length of $\bff_j$ is equal to the Fibonacci number $F_j$ for $j \ge 1$. 
We check that, for $n \ge 3$, we have
$$
r(F_{n}-2, \bff) = F_{n+1} - 2, \quad r(F_{n}-1, \bff) = 2 F_{n} - 1, 
$$
and 
$$
r(F_n + h, \bff) = 2 F_n  + h, \quad h=-1, 0, \ldots , F_{n-1} - 2.
$$
We derive that $\rep(\bff) =  (1 + \sqrt{5})/2$ and $\Rep (\bff) = 2$, thus 
$$
\Rep (\bff) = \rep(\bff) + \frac{1}{1 + \rep(\bff)}.
$$
As noted in the course of Section \ref{sec:3}, it could be tempting to conjecture that 
$$
\Rep (\bfx) - \rep(\bfx) \ge  \frac{1}{1 + \rep(\bfx)},
$$ 
for every $\bfx$ which is not ultimately 
periodic. This is however not true. 
Indeed, we proved in \cite{BuKim19} the existence 
of a Sturmian word $\bfs$ such that $\rep(\bfs) = \sqrt{10} - \frac{3}{2}$. 
A rapid calculation shows that 
$$
\rep(\bfs) + \frac{1}{1 + \rep(\bfs)} 
= \frac{43}{39} \sqrt{10} - \frac{113}{78} = 2.037\ldots ,
$$ 
while Theorem \ref{thsturm} asserts that $\Rep (\bfs) = 2$. 

The difficulty for estimating the gap between $\Rep (\bfx)$ and $\rep(\bfx)$ comes from the following fact. 
At a jump $n_j$ of the function $n \mapsto r(n, \bfx)$, the second occurrence of a word of length $n_{j} +1$ 
in $\bfx_1^{r(n_{j} +1, \bfx)}$ may
overlap the second occurrence of a word of length $n_{j}$ in $\bfx_1^{r(n_{j}, \bfx)}$.  
If there are no such overlaps when $n_j$ is sufficiently large, 
then we say that the word $\bfx$ has the disjointness property and we have 
$$
r(n_{j} + 1, \bfx) - n_{j} - 1 \ge  r(n_{j}, \bfx),
$$
hence, 
\beq     \label{Rep1rep} 
\Rep (\bfx) \ge  \rep (\bfx) + 1.
\eeq
This disjointness property is automatically satisfied if, instead of looking for repetitions of an arbitrary word, 
we consider only repetitions of 
the digit $0$, that is, if we look only at large blocks of $0$. In that case, $r(n, \bfx)$ 
is replaced by the length of the shortest prefix 
of $\bfx$ containing two occurrences of $0^n$. This special case corresponds to the 
approximation by rational numbers whose denominator is a power of $b$ and has 
been studied in \cite{BuLi16}. The inequality \eqref{Rep1rep} then corresponds to the 
inequality $v_b \ge \hv_b / (1 - \hv_b)$ proved in \cite{BuLi16}. Here, as noted 
below Lemma \ref{minmurep}, we consider 
approximation by rational numbers whose denominator is of the form $b^u (b^v - 1)$, for 
positive integers $u$ and $v$. 
This explains why the combinatorial analysis is much more 
delicate in the present case than in \cite{BuLi16}. 

\section*{Acknowledgements}

This paper was written during the second author’s visit to IRMA, supported by the CNRS.
The second author was also supported by the National Research Foundation of Korea (RS-2023-00245719).

\

\end{document}